 \newtheorem{thm}{Theorem}[section]
 \theoremstyle{definition}
 \newtheorem{defn}[thm]{Definition}
 \theoremstyle{remark}
 \newtheorem{rem}[thm]{Remark}
 \newtheorem{ex}[thm]{Example}
 \numberwithin{equation}{section}
\begin{document}

%
%
%---------------------------------------------------------------------------
%Insert here the title, affiliations and abstract:
%

\title{On the push-out spaces}

%----------Author 1
\author[M. Fathy]{M. Fathy}

\address{%
University of Applied Science\\
 and Technology of East Azarbayjan
Cooperation,\\ Tabriz, Iran}

\email{mortaza.fathy@yahoo.com}

%\thanks{This work was completed with the support of our \TeX-pert.}
%----------Author 2
\author{M. Faghfouri}
\address{University of Tabriz,\\ Tabriz, Iran}

 \email{faghfouri@tabrizu.ac.ir}
%----------classification, keywords, date
\subjclass{53C40, 53C42.}

\keywords{Singular points, Normal holonomy group, push-out space.}

%\date{January 1, 2004}
%----------additions
%\dedicatory{To my boss}
%%% ----------------------------------------------------------------------

\begin{abstract}
Let
 $f:M^m\longrightarrow \Bbb R^{m+k}$  be an immersion where $M$ is a smooth connected
 $m$-dimensional manifold without boundary. Then we construct a subspace
 $\Omega(f)$ of $ \mathbb{R}^k$, namely push-out space. which corresponds to a set of embedded manifolds which are either parallel  to $ f $, tubes around $ f $ or, ingeneral, partial tubes around $ f $. This space  is invariant under
 the action of the normal holonomy group, $\mathcal{H}ol(f)$. Moreover, we
 construct geometrically
 some  examples for normal holonomy group and push-out space in ${\Bbb R}^3$.
 These examples will show that properties of push-out space that are proved in the case $\mathcal{H}ol(f)$ is trivial, is not true in general.
\end{abstract}

%%% ----------------------------------------------------------------------
\maketitle
%%% ----------------------------------------------------------------------
%\tableofcontents
\section{Introduction}

In this paper we introduce push-out space  for an immersion
 $f:M^m\longrightarrow \Bbb R^{m+k}$, where $M$ is a smooth connected
 $m$-dimensional manifold without boundary. To do this, we give  some examples
in 3-dimensional Euclidean space, $\Bbb R^3$, infact, in these
examples we calculate normal holonomy group and push-out space
geometrically. We consider the case when $\mathcal{H}ol(f)$ is
non-trivial. This extends the work of Carter and Senturk \cite{2},
who obtained results about the case when $\mathcal{H}ol(f)$ is
trivial. In these examples we show that some of the properties of
push-out space which they obtained is not true for the case when
$\mathcal{H}ol(f)$ is non-trivial.

\section{Basic definitions}

\begin{defn}[\cite{2}]\label{de1} Let $f:M^m\longrightarrow \Bbb R^{m+k}$ be a
smooth immersion where $M$ is a smooth connected $m$-dimensional
manifold without boundary. The total space of the normal bundle of
$f$ is defined by
  $$N(f)=\{(p,x)\in M\times\Bbb R^{m+k}:<x,v>=0\quad  \forall v\in
  f_{*}T_{p}(M)\}$$
The endpoint map $\eta:N(f)\longrightarrow\Bbb R^{m+k}$ is defined
by $\eta(p,x)=f(p)+x$ and, the set of singular points of $\eta$ is
subset $\Sigma(f)\subset N(f)$ called the set of critical normals of
$f$ and the set of focal points of $\eta$ is a subset
$\eta({\Sigma(f)})\subset \Bbb R^{m+k}$.\\
For $p\in M$, we put $N_p(f)=\{x:(p,x)\in N(f)\}$ and
${\Sigma}_{p}(f)=\{x:(p,x)\in \Sigma(f)\}$ respectively, normal
space at $p$ and the set can be thought of as focal points with base
$p$.
\end{defn}
\begin{defn}[\cite{1}]\label{de2}
For $p_0\in M$ and  $p\in M$ and path $\gamma:[0,1]\longrightarrow
M$ from $p_0$ to $p$ define
$\varphi_{p,\gamma}:N_{p_0}(f)\longrightarrow N_p(f)$ by parallel
transport along $\gamma$. The $\varphi_{p,\gamma}$'s are isometries.
The normal holonomy group on $N_{p_0}(f)$, is
$$\mathcal{H}ol(f)=\{\varphi_{p_0,\gamma}:\gamma:[0,1]\longrightarrow M,\quad \gamma(0)=\gamma(1)=p_0\}$$
If the closed path $\gamma$ at $p_0$ is homotopically trivial then
$\varphi_{p_0,\gamma}$ is an element of the restricted normal
holonomy group ${\mathcal{H}ol}_0(f)$.
\end{defn}
\begin{defn}[\cite{3}]\label{de3}
For a fix $p_0\in M$ the push-out space for an immersion
$f:M^m\longrightarrow \Bbb R^{m+k}$ is defined by
$$\Omega(f)=\{x\in N_{p_0}(f):\forall p\in M,\forall \gamma\mbox{ s.t. }\gamma(0)=p_0,\gamma(1)=p \mbox{ then } \varphi_{p,\gamma}x\notin\Sigma_p(f) \}$$
(i.e. $\forall p\in M$, $f(p)+\varphi_{p,\gamma}(x)$ is not a focal
point with base $p$ when $x$ belongs to $\Omega(f)$). Therefore
$\Omega(f)$ is the set of normals at $p_0$, where transported
parallely along all curves, do not meet focal points. So $\Omega(f)$
is invariant under the action of $\mathcal{H}ol(f)$.
\end{defn}

\begin{defn}[\cite{4}]\label{de4}
Let $B\subset N(f)$ be a smooth subbundle with type fiber S where\\
1) S is a smooth submanifold of $\Bbb R^k$\\
2) $B\cap\Sigma(f)=\emptyset$\\
3) B is invariant under parallel transport (along any curve in M).
Then B is a smooth manifold and $g\equiv\eta|_B:B\longrightarrow\Bbb
R^{m+k}$ is a smooth immersion called a partial tube about f.
\end{defn}

\begin{thm}[\cite{2}]\label{2.5}
Let $ \mathcal{H}ol(f) $ is trivial and $ M $ be a compact manifold. Then each path-connected component of
$\Omega(f)$ is open in $\Bbb R^k$.
\end{thm}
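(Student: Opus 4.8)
The plan is to exploit the triviality of $\mathcal{H}ol(f)$ to make parallel transport path-independent, and then to deduce openness from the compactness of $M$ by a tube-lemma argument. First I would record that if $\mathcal{H}ol(f)$ is trivial, then for any two paths $\gamma_1,\gamma_2$ from $p_0$ to $p$ the concatenation $\gamma_1\ast\gamma_2^{-1}$ is a closed path at $p_0$, so $\varphi_{p_0,\gamma_1\ast\gamma_2^{-1}}=\mathrm{id}$, whence $\varphi_{p,\gamma_1}=\varphi_{p,\gamma_2}$. Thus transport depends only on the endpoint, and I may write $\varphi_p:N_{p_0}(f)\to N_p(f)$ unambiguously. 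The definition of $\Omega(f)$ then collapses to
\[
\Omega(f)=\{x\in N_{p_0}(f):\varphi_p(x)\notin\Sigma_p(f)\ \text{for every } p\in M\}.
\]
Transporting an orthonormal basis of $N_{p_0}(f)\cong\Bbb R^k$ along paths gives a globally well-defined parallel frame, so the map $\Phi:M\times\Bbb R^k\to N(f)$, $\Phi(p,x)=(p,\varphi_p(x))$, is a smooth bundle isomorphism; path-independence is exactly what makes $\Phi$ single-valued and smooth.

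Next I would transport the singular set through this trivialization. Since $\dim N(f)=m+k=\dim\Bbb R^{m+k}$, the set $\Sigma(f)$ is the locus where $d\eta$ fails to be an isomorphism, i.e.\ where a Jacobian determinant vanishes; by lower semicontinuity of rank it is closed in $N(f)$. Hence $\widetilde{\Sigma}:=\Phi^{-1}(\Sigma(f))$ is closed in $M\times\Bbb R^k$, and by construction $(p,x)\in\widetilde{\Sigma}$ precisely when $\varphi_p(x)\in\Sigma_p(f)$. Consequently
\[
\Omega(f)=\{x\in\Bbb R^k:(p,x)\notin\widetilde{\Sigma}\ \text{for all } p\in M\}=\Bbb R^k\setminus\pi_2(\widetilde{\Sigma}),
\]
where $\pi_2:M\times\Bbb R^k\to\Bbb R^k$ is the second projection. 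This step is the conceptual heart: it converts the universal quantifier over all $p$ and all paths $\gamma$ into the complement of a single projection.

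Finally I would invoke compactness. Let $x_0\in\Omega(f)$. Then the compact slice $M\times\{x_0\}$ lies in the open set $(M\times\Bbb R^k)\setminus\widetilde{\Sigma}$, so the tube lemma yields an open ball $V\ni x_0$ with $M\times V\subset(M\times\Bbb R^k)\setminus\widetilde{\Sigma}$; equivalently, every $x\in V$ satisfies $\varphi_p(x)\notin\Sigma_p(f)$ for all $p$, so $V\subset\Omega(f)$. Thus $\Omega(f)$ is open in $\Bbb R^k$, and since $\Bbb R^k$ is locally path-connected each path-connected component of $\Omega(f)$ is open as well. I expect the main obstacle to be the careful bookkeeping showing that trivial holonomy legitimately identifies $N(f)$ with $M\times\Bbb R^k$ so that the quantifier over $p$ and $\gamma$ reduces to a projection; once that identification is in hand, closedness of $\Sigma(f)$ together with the compactness of $M$ (through the tube lemma) finishes the argument.
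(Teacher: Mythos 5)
Your proof is correct, and it proves the stronger statement that $\Omega(f)$ itself is open in $\Bbb R^k$, from which openness of each path-component follows by local path-connectedness of $\Bbb R^k$. Note that the paper states Theorem \ref{2.5} without proof, citing Carter and Senturk \cite{2}; your route --- path-independence of parallel transport from the triviality of $\mathcal{H}ol(f)$, the resulting trivialization $N(f)\cong M\times\Bbb R^k$ by a global parallel frame, closedness of $\Sigma(f)$ as the vanishing locus of the Jacobian of the equidimensional map $\eta$, and then the tube lemma (equivalently, closedness of the projection $\pi_2$ along the compact factor $M$) --- is essentially the standard argument of \cite{2}, so there is nothing to repair.
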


\begin{thm}[\cite{2}]\label{2.6}
Let $ \mathcal{H}ol(f) $ is trivial then
Each path-connected component of $\Omega(f)$ is convex.
\end{thm}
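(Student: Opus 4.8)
The plan is to reduce the global convexity statement to a pointwise description of the focal set and then exploit the linearity of parallel transport. Since $\mathcal{H}ol(f)$ is trivial, the parallel transport $\varphi_{p,\gamma}$ is independent of the path $\gamma$, so for each $p\in M$ there is a well-defined linear isometry $\varphi_p\colon N_{p_0}(f)\to N_p(f)$, and
$$\Omega(f)=\bigcap_{p\in M}\varphi_p^{-1}\bigl(N_p(f)\setminus\Sigma_p(f)\bigr)=N_{p_0}(f)\setminus\bigcup_{p\in M}\varphi_p^{-1}\bigl(\Sigma_p(f)\bigr).$$
Because each $\varphi_p$ is linear it carries the segment $[x,y]$ onto $[\varphi_p x,\varphi_p y]$, so it suffices to show that for every $p$ the segment $[\varphi_p x,\varphi_p y]$ avoids $\Sigma_p(f)$ whenever $x,y$ lie in a common path-connected component of $\Omega(f)$.

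The decisive structural step is to identify $\Sigma_p(f)$ explicitly. Recall that $x\in\Sigma_p(f)$ exactly when $\mathrm{Id}-A_x$ is singular, where $A_x$ denotes the shape operator, so that $\Sigma_p(f)=\{x:\det(\mathrm{Id}-A_x)=0\}$. Triviality of $\mathcal{H}ol(f)$ forces the restricted holonomy $\mathcal{H}ol_0(f)$ to be trivial, hence the normal connection is flat, $R^{\perp}=0$. By the Ricci equation $\langle R^{\perp}(X,Y)\xi,\eta\rangle=\langle[A_\xi,A_\eta]X,Y\rangle$, flatness means the shape operators commute and are therefore simultaneously diagonalizable at each $p$. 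Writing the common eigenvalues as linear functionals $\langle\,\cdot\,,n_i(p)\rangle$ with curvature normals $n_i(p)\in N_p(f)$, one obtains $\det(\mathrm{Id}-A_x)=\prod_{i=1}^m\bigl(1-\langle x,n_i(p)\rangle\bigr)$, so that $\Sigma_p(f)=\bigcup_i\{x:\langle x,n_i(p)\rangle=1\}$ is a finite union of affine hyperplanes (the flat directions, where $n_i(p)=0$, contribute no hyperplane).

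With this description the convexity argument becomes elementary. Pulling back by the isometry $\varphi_p$ turns $\varphi_p^{-1}(\Sigma_p(f))$ into a finite union of affine hyperplanes $\{z:\langle z,m_i\rangle=1\}$ in $N_{p_0}(f)$, where $m_i=\varphi_p^{-1}n_i(p)$. If $x,y$ are joined by a path $\sigma$ lying inside $\Omega(f)$, then for each $i$ the continuous function $t\mapsto\langle\sigma(t),m_i\rangle-1$ never vanishes, hence keeps a constant sign; thus $\langle x,m_i\rangle-1$ and $\langle y,m_i\rangle-1$ share the same sign. For any $z=(1-t)x+ty$ the quantity $\langle z,m_i\rangle-1$ is then a convex combination of these two numbers and so is nonzero, which means $[x,y]$ misses every such hyperplane. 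Therefore $\varphi_p([x,y])=[\varphi_p x,\varphi_p y]$ avoids $\Sigma_p(f)$; as $p$ was arbitrary, $[x,y]\subset\Omega(f)$, proving that the component is convex.

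I expect the main obstacle to be the structural reduction of the second paragraph: one must justify carefully that triviality of $\mathcal{H}ol(f)$ yields a flat normal bundle, hence simultaneously diagonalizable shape operators, and then verify that the focal set in each normal space is exactly a finite union of affine hyperplanes. Once this is in hand, the passage to convexity—carried out one point $p$ at a time, so that the possibly infinite union $\bigcup_p\varphi_p^{-1}(\Sigma_p(f))$ causes no difficulty—is routine.
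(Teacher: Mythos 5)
Your proof is correct, and it is essentially the standard argument for this theorem: the paper itself states the result only by citation to Carter--Senturk \cite{2} without reproducing a proof, and your route --- trivial $\mathcal{H}ol(f)$ gives path-independent transport and (via Ambrose--Singer and the Ricci equation) commuting, simultaneously diagonalizable shape operators, so each $\Sigma_p(f)$ is a finite union of affine hyperplanes $\{x:\langle x,n_i(p)\rangle=1\}$ and the constant-sign argument along a connecting path forces every segment to stay in $\Omega(f)$ --- is the one used in that reference. You also correctly identified the crux: without flatness, $\det(\mathrm{Id}-A_x)$ is merely a polynomial in $x$ whose complement need not have convex components, so the reduction to hyperplanes is exactly where the holonomy hypothesis is spent.
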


\begin{rem}
In Example \ref{ex:3.2}, if $\frac{\alpha}{\pi}$ is irrational then
$\Omega(\bar{f})$ is not open in $\Bbb R^2$ but $M=\Bbb S^1$ is
compact. Also, in Example \ref{ex:3.5}, $\Omega({f})=\{O\}$ hence
$\Omega({f})$ is closed in $\Bbb R^2$ but $\mathcal{H}ol(f)$ is
trivial. This shows that Theorem \ref{2.5} is false when M is not
compact or $\mathcal{H}ol(f)$ is non-trivial.
\end{rem}

\begin{rem}
In Example \ref{ex:3.6} one of path-connected components of
$\Omega(f)$, which is the complement space of cone and two other
components in $\Bbb R^3$, is not convex. This shows that Theorem
\ref{2.6} is false when $\mathcal{H}ol(f)$ is non-trivial.
\end{rem}
we conclude that the properties of push-out space that are proved in
the case $\mathcal{H}ol(f)$ is trivial, is not true in general.

\section{Examples of normal holonomy groups and push-out spaces}

\begin{ex}\label{ex.31}
We start with a curve as below
 \begin{center}
  \includegraphics[width=4cm]{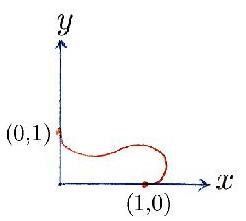}
\end{center}
 suppose this curve is given by $s\mapsto
(\xi(s),\eta(s))$ where $s\in [0,1]$ and at $(1,0,0)$:$s=0$
,${\frac{\partial\xi} {\partial s}}=1,({\frac{\partial}{\partial
s}})^{r}\eta=0$ for all $r\geq 0$ and at
$(0,1,0)$:$s=1$,${\frac{\partial\eta} {\partial
s}}=1,({\frac{\partial} {\partial s}})^{r}\xi=0$ for all $r\geq 0$.
Now,we take this curve in $\Bbb R^3$ and consider the same curves in
$yz$-plane and $xz$-plane and fit together to make a smooth closed
curve in $\Bbb R^3$.% as figure below
%\begin{center}
 % \epsfxsize=6cm
 % \epsfysize=4cm
 % \epsffile{5.ps}
%\end{center}
Now by identifying $\Bbb S^1$ with $\frac {\Bbb R} {3\Bbb Z}$, the
curve in $\Bbb R^3$  can be redefined as $f:\Bbb
S^1\longrightarrow\Bbb R^3$ where:

$f(s)= \left\{
  \begin{array}{cc}
     (\xi(s),\eta(s),0) \qquad\quad \quad 0\leq s\leq 1 \\
     (0,\xi(s-1),\eta(s-1))  \quad 1\leq s\leq 2\\
     (\eta(s-2),0,\xi(s-2)) \quad 2\leq s\leq3\\
  \end{array}
\right.$

\begin{center}
  \includegraphics[width=5cm]{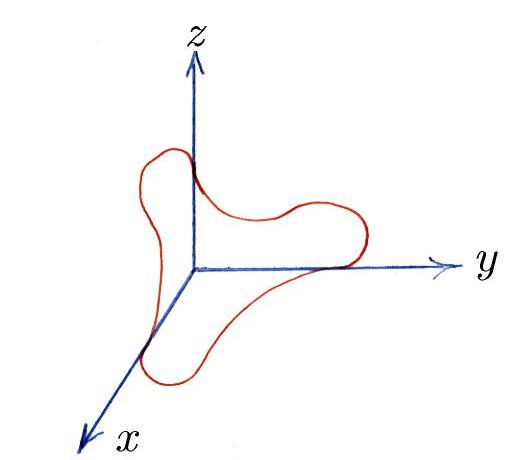}
\end{center}
\end{ex}
To find the normal holonomy group of the above curve, we will
consider normal vector to the curve under parallel transport. As
each part of the curve lies in a 2-plane, the normal plane at a
point of the curve is spanned by the perpendicular direction to the
2-planes.\\
{\bf Step1.}Start with the normal vector at (1,0,0), in the diagram,
it stays in the xy-plane under parallel transport.
\begin{center}
 \includegraphics[width=6cm]{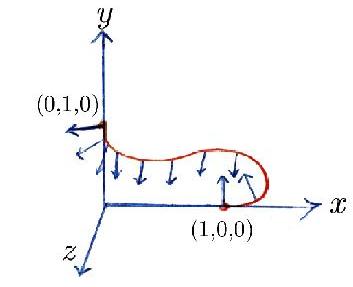}
\end{center}
The normal vector (0,1,0) at (1,0,0) goes to normal vector (-1,0,0)at (0,1,0).\\
 {\bf Step2.}At (0,1,0)the the normal vector (-1,0,0) is
 perpendicular to the yz-plane, it stays perpendicular to the
 yz-plane under parallel transport form (0,1,0) to (0,0,1).
 \begin{center}
 \includegraphics[width=6cm]{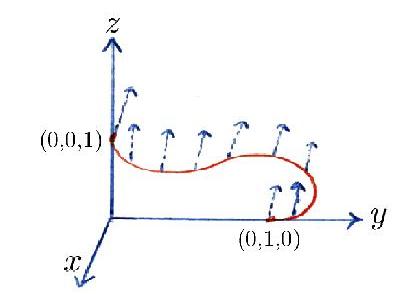}
\end{center}
The normal vector (-1,0,0)at(0,1,0) goes to normal vector(-1,0,0) at (0,0,1).\\
{\bf Step3.}The the normal vector (-1,0,0) is in the xz-plane at
(0,0,1) and stays in the xz-plane from (0,0,1) to (1,0,0).
\begin{center}
\includegraphics[width=6cm]{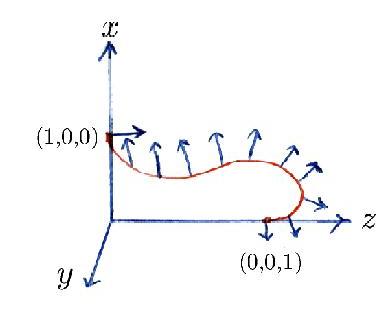}
\end{center}
The normal vector (-1,0,0,) at (1,0,0)  by going once around the
curve the normal vector will turn about $\frac {\pi}{2}$.
\begin{center}
 \includegraphics[width=6cm]{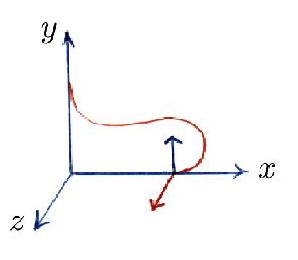}
\end{center}
Going around of curve again, the normal vector moves through another
$\frac {\pi}{2}$ and after four times around the curve back to its
original position. this shows that $\mathcal{H}ol(f)$ is generated
by a rotation through $\frac {\pi}{2}$.
\begin{center}
\includegraphics[width=6cm]{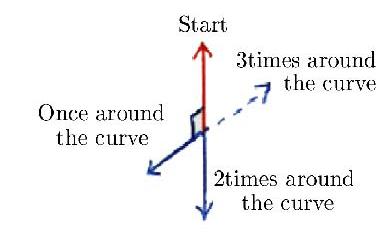}
\end{center}
Now we find the push-out space of $f$. Except at end-points of three
areas, locally the curve lies in a 2-plane so the focal points with
base $s$,$f(s)+\Sigma_s(f)$, consists of a straight line through the
center of curvature, $c(s)$, of the curve at $s$, perpendicular to
the line joining $c(s)$ and $f(s)$.
\begin{center}
\includegraphics[width=6cm]{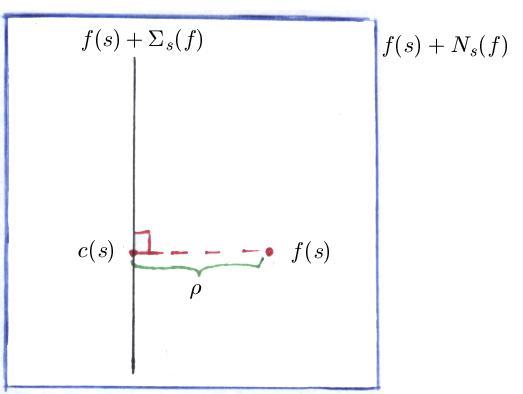}
\end{center}
At end-points of three areas, and possibly some other points, the
focal set is empty as the center of curvature "at infinity".\\
 so
$\Sigma_s(f)$ is a line in $N_s(f)$. The image of $\Sigma_s(f)$
under normal holonomy group is obtained by rotating it through
$\frac {\pi}{2}$ until it returns to the original position.
\begin{center}
  %\input{epsf}  \epsfxsize=10cm
%\hspace{4cm}
 \includegraphics[width=6cm]{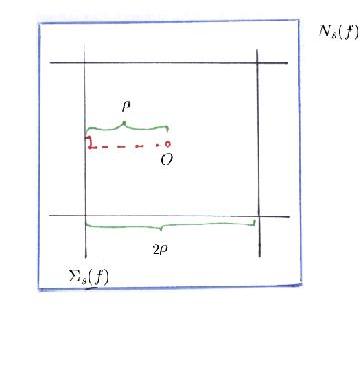}
\end{center}
Now, fix the normal plane $N_{s_0}(f)$ at $f(S_0)=(1,0,0)$ where
$s_0=0$ and use parallel transport to identify all the normal planes
with the normal plane  $N_{s_0}(f)$. The push-out space is
complement of all the $\Sigma_s(f)$ and their images under normal
holonomy group.
\begin{center}
  %\input{epsf}  \epsfxsize=10cm  \epsfysize=3cm  \epsffile{1.ps}
 %\hspace{2cm}
 \includegraphics[width=6cm]{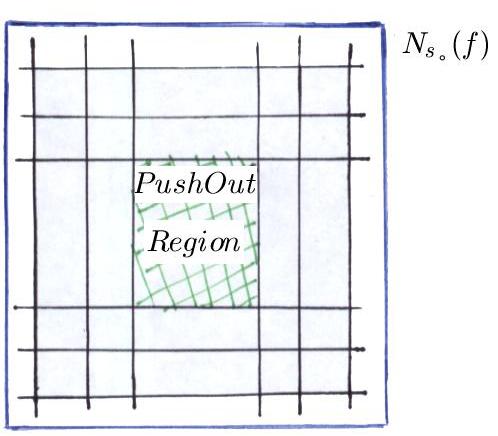}
\end{center}
Therefore the push-out space of $f:\Bbb S^1\longrightarrow\Bbb R^3$
is an open square $Q$ with sides of length $2\rho$ where $\rho$ is
the minimum absolute value of the radius of curvature of the
original curve in the xy-plane. (i.e. $\Omega(f)$)is the interior of
the smallest square on $N_{s_0}(f)$.)

%A calculation shows that $\mathcal{H}ol(f)$ is generated by a
%rotation through $\frac {\pi}{2}$ and push-out space of $f:\Bbb
%S^1\longrightarrow \Bbb R^3$ is an open square Q with sides of
%length 2$\rho$ where $\rho$ is the minimum absolute value of the
%radius of curvature of the original curve in the xy-plane.
\begin{ex}\label{ex:3.2}
We consider the immersion $\bar{f}$ as in Example \ref{ex.31} except that
the xz-plane is tilted through an angle $\alpha$.

In other words, $\bar{f}=Lof$ where f is the immersion in example \ref{ex.31} and $L$ is the linear
transformation given by\\
$L=\left(
    \begin{array}{ccc}
    1 & 0 & 0 \\
    0 & 1 & \tan\alpha \\
    0 & 0 & 1 \\
    \end{array}
  \right)$ where
$0<\alpha<\frac {\pi} {2}$.
So, we have
 $$\bar{f}(s)=\begin{cases}
     (\xi(s),\eta(s),0)& 0\leq s\leq 1 \\
     (0,\xi(s-1)+\eta(s-1)\tan{\alpha},\eta(s-1))  &1\leq s\leq 2\\
     (\eta(s-2),\xi(s-2)\tan{\alpha},\xi(s-2)) & 2\leq s\leq3
\end{cases}$$

 The end-points of three areas of this immersion are
(1,0,0),(0,1,0) and (0,$\tan\alpha$,1). Note that at these points
the tangent to the curve is the radial line form (0,0,0) so unit
tangent at (1,0,0) is (1,0,0)and unit tangent at (0,$\tan\alpha$,1)
is
$\frac{(0,\tan\alpha,1)}{\sqrt{1+\tan^{2}\alpha}}$ etc.\\
As in Example \ref{ex.31}, under parallel transport, the normal vector
(0,1,0) at (1,0,0) goes to the normal vector (-1,0,0) at (0,1,0),
which goes to the normal vector (-1,0,0) at (0,$\tan\alpha$,1),
which goes to the normal
vector$\frac{(0,\tan\alpha,1)}{\sqrt{1+\tan^{2}\alpha}}$ at (1,0,0).
So going once around the curve the normal vector has moved through
$\frac {\pi}{2}-\alpha$.
\begin{center}
  \includegraphics[width=6cm]{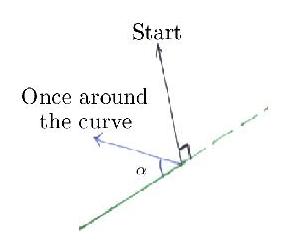}
\end{center}

 This shows that  $\mathcal{H}ol(\bar{f})$
is generated by a rotation through $\frac
  {\pi}{2}-\alpha$
As in Example 3.1, the image of $\Sigma_s(\bar {f})$ under normal
holonomy group is obtained by rotating the line $\Sigma_s(\bar {f})$
through $\frac {\pi}{2}-\alpha$. It depends on $\alpha$ and is
obtained as:
\begin{center}
\includegraphics[width=6cm]{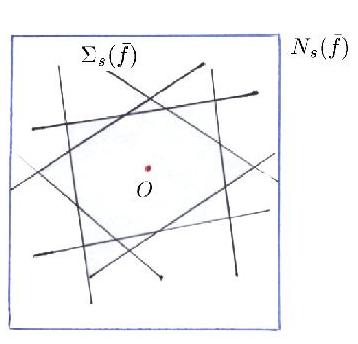}
\end{center}
If $R$ is the rotating through an angle $\frac {\pi}{2}-\alpha$ then
 $\Omega({\bar{f}})=\bigcap\{(R)^nQ: n\in\Bbb Z\}$ where Q is a
square as in Example \ref{ex.31}, thus if $\frac {\alpha}{\pi}$ is rational
then $(R)^n(\Sigma_s(\bar {f}))=\Sigma_s(\bar {f})$ for some
$n\in\Bbb Z $ and so,$\Omega({\bar{f}})$  is the interior of the
smallest polygon. If $\frac {\alpha}{\pi}$ is irrational then
$(R)^n(\Sigma_s(\bar {f}))\neq \Sigma_s(\bar {f})$ for any $n\in\Bbb
Z $ and so,$\Omega({\bar{f}})$  is an open disk of radius $\rho$
together with a dense set of points on the boundary circle where
$\rho$ is minimum absolute value of the radius of curvature of the
immersed curve by $\bar{f}$.
\end{ex}

\begin{ex}\label{ex:3.3} In Example \ref{ex:3.2}, we replace the immersion $\bar{f}$ with the
immersion $\bar{f}oh$ where $h:\Bbb R\longrightarrow\Bbb
S^1\equiv\frac {\Bbb R} {3\Bbb Z}$ is covering projection. Since
${\Bbb R}$ is simply connected, for any arbitrary point $s\in {\Bbb
R}$, any closed path at $s$ is nulhomotopic with constant path at
$s$, hence definition \ref{de2} shows that, the normal holonomy
group of $\bar{f}Oh$ is trivial
(i.e.$\mathcal{H}ol(\bar{f}oh)=\mathcal{H}ol_0(\bar{f}oh)$ ). To
calculate $\Omega(\bar{f}oh)$, we prove the theorem \ref{th:3.4}, in
general. It will show that $\Omega(\bar{f}oh)=\Omega(\bar{f})$.
\end{ex}

\begin{thm}\label{th:3.4}
Let $f:M^m\longrightarrow \Bbb R^{m+k}$ be an immersion and
$\hat{M}$ be any covering space with covering projection
$h:\hat{M}\longrightarrow M$. If $\hat{f}=foh$, then
$\Omega(\hat{f})=\Omega({f})$.
\end{thm}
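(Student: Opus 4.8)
The plan is to use that a covering projection is a local diffeomorphism, so that every local differential-geometric quantity attached to $\hat f=f\circ h$ at $\hat p\in\hat M$ agrees with the corresponding quantity for $f$ at $h(\hat p)$, while the only global ingredient --- the family of paths --- is governed by the path-lifting property of coverings. First I would fix a basepoint $\hat p_0\in\hat M$ with $h(\hat p_0)=p_0$ and identify the normal spaces. Since $h$ is a local diffeomorphism, $h_*T_{\hat p}(\hat M)=T_{h(\hat p)}(M)$, hence $\hat f_*T_{\hat p}(\hat M)=f_*T_{h(\hat p)}(M)$; because $\hat f(\hat p)=f(h(\hat p))$, Definition \ref{de1} shows $N_{\hat p}(\hat f)=N_{h(\hat p)}(f)$ as the same subspace of $\Bbb R^{m+k}$. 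In particular $N_{\hat p_0}(\hat f)=N_{p_0}(f)$, so both push-out spaces sit inside one common space and the claimed equality is meaningful.

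Next I would establish two compatibilities. Consider the fiberwise map $H: N(\hat f)\to N(f)$, $H(\hat p,x)=(h(\hat p),x)$, which is well defined by the identification above and is a local diffeomorphism because $h$ is. It intertwines the endpoint maps, $\eta\circ H=\hat\eta$, since $\eta(h(\hat p),x)=f(h(\hat p))+x=\hat f(\hat p)+x=\hat\eta(\hat p,x)$; as a local diffeomorphism $H$ carries critical points to critical points, whence $\Sigma(\hat f)=H^{-1}(\Sigma(f))$, that is $\Sigma_{\hat p}(\hat f)=\Sigma_{h(\hat p)}(f)$. Secondly, the normal connection of $\hat f$ is the pull-back under $h$ of the normal connection of $f$, so parallel transport commutes with the covering: for any path $\hat\gamma$ in $\hat M$ from $\hat p_0$ to $\hat p$, writing $\gamma=h\circ\hat\gamma$ (a path from $p_0$ to $p:=h(\hat p)$), one has $\hat\varphi_{\hat p,\hat\gamma}=\varphi_{p,\gamma}$ as maps of $N_{p_0}(f)$.

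With these in hand the two inclusions are formal consequences of Definition \ref{de3}. For $\Omega(f)\subseteq\Omega(\hat f)$, take $x\in\Omega(f)$ and an arbitrary path $\hat\gamma$ in $\hat M$ from $\hat p_0$; projecting to $\gamma=h\circ\hat\gamma$ and using $x\in\Omega(f)$ gives $\hat\varphi_{\hat p,\hat\gamma}x=\varphi_{p,\gamma}x\notin\Sigma_p(f)=\Sigma_{\hat p}(\hat f)$, so $x\in\Omega(\hat f)$. For the reverse inclusion $\Omega(\hat f)\subseteq\Omega(f)$, take $x\in\Omega(\hat f)$ and an arbitrary path $\gamma$ in $M$ starting at $p_0$; here I would invoke path lifting to obtain a lift $\hat\gamma$ starting at $\hat p_0$ with $h\circ\hat\gamma=\gamma$ and endpoint $\hat p$ over $p$, so that $\varphi_{p,\gamma}x=\hat\varphi_{\hat p,\hat\gamma}x\notin\Sigma_{\hat p}(\hat f)=\Sigma_p(f)$, giving $x\in\Omega(f)$.

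I expect the main work to lie not in the set-theoretic bookkeeping but in rigorously justifying the two compatibilities of the second paragraph --- in particular that the normal connection (hence parallel transport) of $\hat f$ descends to that of $f$. The cleanest route is to check these on an evenly-covered neighborhood, where $h$ restricts to an honest diffeomorphism, and then note that the critical-normal set and the normal connection are local invariants unchanged by such a diffeomorphism. Once these are in place, the asymmetry between the two inclusions is precisely the asymmetry between projecting paths (elementary, used for one direction) and lifting paths (used for the other); the lifting step is the sole point at which the hypothesis that $h$ is a genuine covering, and not merely a local diffeomorphism, is actually needed.
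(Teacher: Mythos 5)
Your proposal is correct and follows essentially the same route as the paper's own proof: identify $N_{\hat p}(\hat f)$ with $N_{h(\hat p)}(f)$ via $\hat f_*T_{\hat p}(\hat M)=f_*T_{h(\hat p)}(M)$, deduce $\Sigma_{\hat p}(\hat f)=\Sigma_{h(\hat p)}(f)$ and $\hat\varphi_{\hat p,\hat\gamma}=\varphi_{p,\gamma}$ for $\gamma=h\circ\hat\gamma$, and then obtain both inclusions directly from Definition \ref{de3}. If anything you are more careful than the paper, which settles $\Omega(\hat f)\subseteq\Omega(f)$ with ``by the same way'': you correctly isolate the path-lifting property of the covering as the genuinely new ingredient that this reverse direction requires, and you supply the endpoint-map intertwining argument for $\Sigma_{\hat p}(\hat f)=\Sigma_{h(\hat p)}(f)$ that the paper asserts without justification.
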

\begin{proof}
 Let $x\in\Omega({f}) $ and fix $p_0\in M$. Then
definition \ref{de3} implies that, $\forall p\in M,\forall \gamma$ s.t.
$\gamma(0)=p_0, \gamma(1)=p$; $\varphi_{p,\gamma}x\neq \Sigma_p(f).$\\
we define the total space of the normal bundle of $\hat{f}$ by
$$N(\hat{f})=\{(\hat{p},x)\in \hat{M}\times\Bbb R^{m+k}:<x,v>=0\quad  \forall v\in
 \hat{ f}_{*}T_{\hat{p}}(\hat{M})\}$$
  Also, for any $\hat{p}\in h^{-1}(p)$
  we have
  \begin{align*}
\hat{f}_{*}T_{\hat{p}}(\hat{M})&=(foh)_{*}T_{\hat{p}}(\hat{M})\\
&=(f_{*}oh_{*})T_{\hat{p}}(\hat{M})\\
&=f_{*}T_p(M)
  \end{align*}
  this shows that, for any $\hat{p}\in h^{-1}(p)$ we have $N_{\hat{p}}(\hat{f})=N_p(f)$
  and so$\Sigma_{\hat{p}}(\hat{f})=\Sigma_p(f)$. Further, we fix$\hat{p_0}\in
  h^{-1}(p_0)$ then
  $\hat{\varphi}_{\hat{p},\hat{\gamma}}=\varphi_{p,\gamma}$ where
  $\hat{\gamma}:[0,1]\longrightarrow\hat{M}$
   s.t. $\hat{\gamma(0)}=\hat{p_0}, \hat\gamma(1)=\hat{p}.$
   Therefore,$\forall \hat{p}\in \hat {M}, \forall \hat{\gamma}$;
   $\hat{\varphi}_{\hat{p},\hat{\gamma}}x \neq \Sigma_{\hat{p}}(\hat{f}).$
Now using definition \ref{de3} again, follows that, $x\in
\Omega(\hat{f})$. By the same way proves that
$\Omega(\hat{f})\subseteq \Omega(f)$.
\end{proof}

\begin{ex}\label{ex:3.5}
If $\frac {\pi}{2}-\alpha=\frac {2\pi}{n}$, then Example \ref{ex:3.3} can be
modified by replacing h by the n-fold covering $\bar{h}:\Bbb
S^1\longrightarrow \Bbb S^1$.

Going once around the first $\Bbb S^1$ in $\bar{h}:\Bbb
S^1\longrightarrow \Bbb S^1$ corresponds to moving n times around
the second $\Bbb S^1$ so parallely transporting a normal n times
around the second $\Bbb S^1$ which gives a rotation of
\begin{align*}
n(\frac {\pi}{2}- \alpha)&=n(\frac{2\pi}{n})\\
&=2\pi
  \end{align*}
  i.e. the identity, so
$\mathcal{H}ol(\bar{f}o\bar{h})=\mathcal{H}ol_0(\bar{f}o\bar{h})$.
Since, the immersed curve by $\bar{f}o\bar{h}$ and the immersed
curve by $\bar{f}$ have same figure in $\Bbb R^3$ and
$\mathcal{H}ol(\bar{f}o\bar{h})$ is trivial so the singular sets of
them also the same (i.e. $\Sigma(\bar{f}o\bar{h})=\Sigma(\bar{f})$).
This implies that $\Omega(\bar{f}o\bar{h})=\Omega(\bar{f})$.
\end{ex}

\begin{ex}\label{ex:3.6}
We consider a sequence of curves $f_n$ in $\Bbb R^3$ defined as in
Example \ref{ex.31} except that $||f_n(s)||$ and the curvature tends to
infinity with n when $s=\frac {1}{2},\frac {3}{2}$ or $\frac {5}{2}$
but is bounded otherwise.\\
\begin{center}
 \includegraphics[width=6cm]{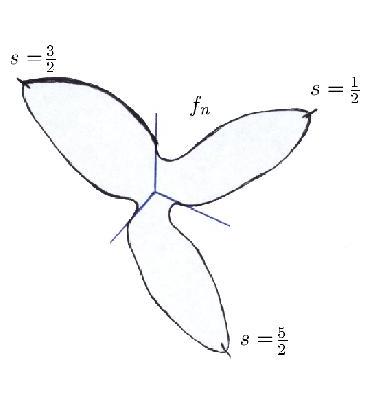}
\end{center}
Now, we define the immersion $f:\Bbb R\longrightarrow\Bbb R^3$ by
$f(s\pm3n)=f_n(s)$. When n tends to infinity, the immersion $f:\Bbb
R\longrightarrow\Bbb R^3$ has a sequence of points where the
curvature tends to infinity and the radius of curvature at these
points can be arbitrary small; in other words, $\exists s$ where
$\Sigma_s(f)$ is arbitrary close to "O" in $N_s(f)$. So $\{O\}$ is
the only point not in the image of $\Sigma_s(f)$ under normal
holonomy group for all $s\in \Bbb R.$ Then $\Omega(f)=\{O\}$. In
this case because $\Bbb R$ is simply connected then
$\mathcal{H}ol(f)$ is trivial.
\end{ex}

%\begin{example}
The following results  have been proved in \cite{2}, when
$\mathcal{H}ol(f)$ is trivial.
\begin{thm}\label{th:3.7}
Let
%$f:M^m\longrightarrow \Bbb R^{m+k}$ be a smooth immersion where
M be a compact manifold, then each path-connected component of
$\Omega(f)$ is open in $\Bbb R^k$.
\end{thm}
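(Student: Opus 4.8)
The plan is to deduce the statement from the stronger fact that, under the hypothesis carried over from Theorem~\ref{2.5} that $\mathcal{H}ol(f)$ is trivial, the \emph{whole} set $\Omega(f)$ is open in $N_{p_0}(f)\cong\Bbb R^k$; once this is known, each path-connected component of $\Omega(f)$ is a component of an open subset of $\Bbb R^k$ and is therefore itself open, which is exactly the assertion. That some hypothesis on $\mathcal{H}ol(f)$ is indispensable is already visible from Example~\ref{ex:3.2}: there $M=\Bbb S^1$ is compact, yet for irrational $\alpha/\pi$ the push-out space $\Omega(\bar f)$ --- an open disc together with a dense set of boundary points --- is a single path-connected component which is not open. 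So any correct argument must use triviality of the holonomy in an essential way.

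First I would record that the focal set is closed. Since $\dim N(f)=m+k=\dim\Bbb R^{m+k}$, the endpoint map $\eta$ of Definition~\ref{de1} has $\Sigma(f)$ equal to its critical set $\{(p,x):\det d\eta_{(p,x)}=0\}$, and as the vanishing locus of a continuous function this is closed in $N(f)$. Concretely, in a parallel normal frame one computes $d\eta=(I-A_x)\oplus\mathrm{id}$ on tangent $\oplus$ normal directions, where $A_x$ is the shape operator in the normal direction $x$, so $\Sigma_p(f)=\{x\in N_p(f):\det(I-A_x)=0\}$; but only closedness of $\Sigma(f)$ is needed below.

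Next I would transport the focal set to the fixed plane $N_{p_0}(f)$. Because $\mathcal{H}ol(f)$ is trivial, $\varphi_{p,\gamma}$ is independent of the path $\gamma$ (Definition~\ref{de2}) and there is a global parallel orthonormal normal frame; writing $\varphi_p$ for the resulting path-independent transport, the map $\Phi:M\times N_{p_0}(f)\to N(f)$, $\Phi(p,y)=(p,\varphi_p(y))$, is continuous. Setting $\tilde\Sigma=\Phi^{-1}(\Sigma(f))=\{(p,y):\varphi_p(y)\in\Sigma_p(f)\}$, this is closed in $M\times N_{p_0}(f)$, and Definition~\ref{de3} rewrites the push-out space as the complement $\Omega(f)=N_{p_0}(f)\setminus\mathrm{pr}(\tilde\Sigma)$, where $\mathrm{pr}:M\times N_{p_0}(f)\to N_{p_0}(f)$ is the projection.

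The main step --- and the only place compactness enters --- is to show $\mathrm{pr}(\tilde\Sigma)$ is closed. Here I would invoke the standard fact (the ``tube lemma'') that projection off a compact factor is a closed map: since $M$ is compact, $\mathrm{pr}$ is closed, so the image of the closed set $\tilde\Sigma$ is closed, whence $\Omega(f)$ is open and the theorem follows. The hard part is conceptual rather than computational: one must see that the path-quantifier ``$\forall\gamma$'' in Definition~\ref{de3} collapses to a single continuous transport $\varphi_p$ precisely when the holonomy is trivial. If $\mathcal{H}ol(f)$ is nontrivial the quantifier instead forces an intersection over the whole (possibly dense) holonomy orbit of $\Sigma_{p_0}(f)$, the closed-projection argument no longer applies, and openness can genuinely fail --- in agreement with Example~\ref{ex:3.2}.
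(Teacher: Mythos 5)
Your proof is correct, but a point of comparison should be noted at the outset: the paper itself contains no proof of Theorem~\ref{th:3.7} --- it is restated from Carter and Senturk \cite{2}, with the hypothesis that $\mathcal{H}ol(f)$ is trivial supplied only by the sentence preceding the theorem rather than by the formal statement. So there is no in-paper argument to measure yours against; what can be said is that your write-up correctly reinstates the missing triviality hypothesis, correctly observes via Example~\ref{ex:3.2} that it cannot be dropped, and then gives the standard (and, as far as one can reconstruct, essentially the cited source's) argument. The chain is sound at every link: $\Sigma(f)$ is closed in $N(f)$ as the vanishing locus of $\det d\eta$, since $\dim N(f)=m+k$; triviality of $\mathcal{H}ol(f)$ on the connected manifold $M$ makes $\varphi_{p,\gamma}$ path-independent (two paths from $p_0$ to $p$ differ by a loop at $p_0$), and the resulting global parallel frame makes $\Phi(p,y)=(p,\varphi_p(y))$ continuous, so the quantifier over $\gamma$ in Definition~\ref{de3} collapses and $\Omega(f)=N_{p_0}(f)\setminus\mathrm{pr}\bigl(\Phi^{-1}(\Sigma(f))\bigr)$; compactness of $M$ enters exactly once, making $\mathrm{pr}$ a closed map, whence $\Omega(f)$ is open; and local path-connectedness of $\Bbb R^k$ converts openness of $\Omega(f)$ into openness of each path-component, which is the assertion. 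Your closing remark is also the right diagnosis of where the argument dies without the hypothesis: with nontrivial holonomy the definition forces an intersection over the holonomy orbit of the transported focal sets, which for a dense orbit (irrational $\alpha/\pi$ in Example~\ref{ex:3.2}) destroys openness.
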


\begin{thm}\label{th:3.8}
%Let $f:M^m\longrightarrow \Bbb R^{m+k}$ be a smooth immersion, then
Each path-connected component of $\Omega(f)$ is convex.
\end{thm}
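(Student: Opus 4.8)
The plan is to show that $\Omega(f)$ is the complement, inside the fixed normal space $N_{p_0}(f)\cong\mathbb{R}^k$, of a (possibly infinite) union of affine hyperplanes, and then to argue that any connected component of such a complement is automatically convex. First I would invoke the hypothesis that $\mathcal{H}ol(f)$ is trivial: by Definition \ref{de2} this makes parallel transport independent of the chosen path, so for each $p\in M$ there is a well-defined linear isometry $\varphi_p:N_{p_0}(f)\to N_p(f)$, and Definition \ref{de3} collapses to
$$\Omega(f)=N_{p_0}(f)\setminus\bigcup_{p\in M}\varphi_p^{-1}\bigl(\Sigma_p(f)\bigr).$$
Thus everything reduces to understanding the shape of each focal set $\Sigma_p(f)$ and then pulling it back to $N_{p_0}(f)$ by the isometries $\varphi_p$.

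The key geometric step is to prove that each $\Sigma_p(f)$ is a union of affine hyperplanes of $N_p(f)$. Recall that $(p,x)$ is a critical normal exactly when $I-A_x$ is singular, where $A_x$ is the self-adjoint shape operator in the direction $x$, depending linearly on $x$. Triviality of $\mathcal{H}ol(f)$ forces the normal connection to be flat, and by the Ricci equation a flat normal bundle is equivalent to the shape operators forming a commuting family $\{A_x\}_{x\in N_p(f)}$. A commuting family of self-adjoint operators on $T_pM$ can be simultaneously diagonalized, giving a decomposition $T_pM=\bigoplus_i E_i$ on which $A_x$ acts as the scalar $\langle x,\nu_i\rangle$ for suitable curvature normal vectors $\nu_i\in N_p(f)$. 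Hence $\det(I-A_x)=\prod_i(1-\langle x,\nu_i\rangle)^{\dim E_i}$, which vanishes precisely when $\langle x,\nu_i\rangle=1$ for some $i$ with $\nu_i\neq0$. Therefore $\Sigma_p(f)=\bigcup_{\nu_i\neq0}\{x:\langle x,\nu_i\rangle=1\}$ is a union of affine hyperplanes, the directions with $\nu_i=0$ simply contributing no focal point (the ``center of curvature at infinity'' already observed in Example \ref{ex.31}).

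Next I would transport this description back to the base space. Since each $\varphi_p$ is a linear isometry, the preimage $\varphi_p^{-1}(\{x:\langle x,\nu_i\rangle=1\})$ is again an affine hyperplane of $N_{p_0}(f)$. Consequently the whole obstruction set $\bigcup_{p}\varphi_p^{-1}(\Sigma_p(f))$ is a union of affine hyperplanes, and $\Omega(f)$ is exactly its complement in $\mathbb{R}^k$.

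Finally I would establish the purely convex-geometric fact that every connected component of the complement of a union of affine hyperplanes is convex. Let $C$ be such a component and let $\{H_\alpha\}$ be the family of hyperplanes. Since $C$ is connected and disjoint from each $H_\alpha$, it lies entirely in one of the two open half-spaces $H_\alpha^{\pm}$ determined by $H_\alpha$; call this side $H_\alpha^{s(\alpha)}$. The set $D=\bigcap_\alpha H_\alpha^{s(\alpha)}$ is convex, connected, disjoint from every $H_\alpha$, and contains $C$; by maximality of the component it must equal $C$, so $C$ is convex. I expect the main obstacle to be the geometric step, namely rigorously deriving the hyperplane structure of $\Sigma_p(f)$ from flatness of the normal bundle via the Ricci equation and simultaneous diagonalization; the reduction and the final convex-geometry argument are comparatively routine. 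It is worth emphasizing that the last argument requires no local finiteness of the hyperplane family, which is essential here since the index set $M$ may be infinite.
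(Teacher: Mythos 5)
Your proposal is correct, but first a caveat about the comparison: the paper contains no proof of Theorem \ref{th:3.8} at all --- it simply states the result (under the standing hypothesis that $\mathcal{H}ol(f)$ is trivial) and attributes it to Carter and Senturk \cite{2} --- so your argument can only be measured against that cited source, whose strategy (focal hyperplanes plus a half-space argument) yours essentially reconstructs, self-contained. Each step checks out. Trivial $\mathcal{H}ol(f)$ does make parallel transport path-independent (any two paths from $p_0$ to $p$ differ by a loop, whose transport is the identity), so $\Omega(f)=N_{p_0}(f)\setminus\bigcup_{p}\varphi_p^{-1}(\Sigma_p(f))$ as you claim; trivial full holonomy contains trivial restricted holonomy, so $R^{\perp}=0$, and the Ricci equation then gives the commuting family $\{A_x\}$, the common eigenspace decomposition with linear eigenvalue functionals $x\mapsto\langle x,\nu_i\rangle$, and hence $\Sigma_p(f)=\bigcup_{\nu_i\neq 0}\{x:\langle x,\nu_i\rangle=1\}$; the identification of critical normals with $\{x:\det(I-A_x)=0\}$ is justified by computing $\eta_*$ along a curve $(p(t),x(t))$ in $N(f)$, which yields $f_*(I-A_x)p'+\nabla^{\perp}_{p'}x$ plus the vertical directions, so $\eta_*$ is singular exactly when $I-A_x$ is. The most valuable part of your write-up is the final lemma, and your closing remark is not a throwaway: since the family $\{H_\alpha\}$ is indexed by all of $M$ it is typically not locally finite, and $\Omega(f)$ need not be open --- Example \ref{ex:3.2} with $\alpha/\pi$ irrational exhibits exactly this, an open disk together with a dense subset of its boundary circle (which is indeed convex, consistent with your lemma) --- so an argument relying on openness of the components or on a locally finite arrangement would fail here. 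Your maximality argument survives precisely because $D=\bigcap_\alpha H_\alpha^{s(\alpha)}$ is convex, hence \emph{path}-connected, so it is absorbed into the path-component $C$ even when $\Omega(f)$ is not open; I would only suggest you say ``path-connected component'' throughout rather than ``connected component,'' since that is what the theorem asserts and what the maximality step actually uses.
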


\begin{rem}
In Example \ref{ex:3.2}, if $\frac{\alpha}{\pi}$ is irrational then
$\Omega(\bar{f})$ is not open in $\Bbb R^2$ but $M=\Bbb S^1$ is
compact. Also, in Example \ref{ex:3.5} $\Omega({f})=\{O\}$ so
$\Omega({f})$ is closed in $\Bbb R^2$ but $\mathcal{H}ol(f)$ is
trivial. This shows that Theorem \ref{th:3.7} is false when M is not
compact or $\mathcal{H}ol(f)$ is non-trivial.
\end{rem}

\begin{rem}
In Example \ref{ex:3.6} one of the path-connected components of
$\Omega(f)$, which is the complement space of cone and two other
components in $\Bbb R^3$, is not convex. This shows that Theorem
\ref{th:3.8} is false when $\mathcal{H}ol(f)$ is non-trivial.
\end{rem}
Thus the properties of push-out space that are proved in the case
$\mathcal{H}ol(f)$ is trivial, is not true in general.

% ------------------------------------------------------------------------
\end{document}